\newtheorem{theorem}{Theorem}[section]
\newtheorem{lemma}{Lemma}[section]
\newtheorem{proposition}{Proposition}[section]
\theoremstyle{definition}
\newtheorem{definition}{Definition}[section]
\theoremstyle{remark}
\numberwithin{equation}{section}
\def\va{\varphi}
\newcommand{\R}{{\mathbb R}}
\def\f{\frac}
\def\hf1{^\f{1}{1-\xi^2}}
\newcommand{\eps}{\varepsilon}
\author[Wentao Cao]{Wentao Cao}
\address{Institute f\"{u}r mathematik, Universit\"{a}t Leipzig, D-04109, Leipzig, Germany}
\email{wentao.cao@math.uni-leipzig.de}
\author[Teng Wang]{Teng Wang}
\address{ College of Applied Sciences, Beijing University of Technology, Beijing 100124, China }
\email{tengwang@amss.ac.cn}
\title[Vanishing viscosity limit]
{Vanishing viscosity limit for viscous Burgers-Vlasov equations}
\date{\today}
\begin{document}

\begin{abstract}
We establish the vanishing viscosity limit of viscous Burgers-Vlasov equations for one dimensional kinetic model about interactions between a viscous fluid
and dispersed particles by using compensated compactness technique and the evolution of level sets arguments.
The limit we obtained is exactly a finite-energy weak solution to the inviscid  equations.
\end{abstract}
\maketitle
%%%%%%%%%%%%%%%%%%%%%%%%%%%%%%%%%
\medskip

\noindent {\bf 2010 AMS Classification}:  76T10, 35F20, 35Q35, 35Q72,  45K05, 82D05.
\medskip

\noindent {\bf Key words}: vanishing viscosity limit, two phase flow, Vlasov equation, Burgers equation, finite-energy weak solution.
%

%%%%%%%%%%%%%%%%%%%%%%%%%%%%%%%%%%%%%%%%%%%%%%%%%%
\section{Introduction}
%%%%%%%%%%%%%%%%%%%%%%%%%%%%%%%%%%%%%%%%%%%%%%%%%%%%%%%%%%%%%%%%%
%

In this note we consider the  vanishing viscosity limit of the following viscous Burgers-Vlasov equations:
\begin{equation}\label{e:vis-burgers-vlasov}
\left\{
\begin{array}{ll}
\displaystyle u_t+uu_x=\eps u_{xx}+\int_\R fvdv-u\int_\R fdv,  \\
\displaystyle f_t+v f_x+(f(u-v))_v=0,
\end{array}
\right.
\end{equation}
with the  initial data
\begin{equation}\label{e:initial}
u(x, 0)=u_0(x), \quad f(x, v, 0)=f_0(x, v),
\end{equation}
such that
\begin{equation}\label{e:uf0-asymptoic}
\lim_{x\rightarrow\pm\infty}u_0(x)=u^{\pm},  \quad \lim_{x, v\rightarrow\pm\infty}f_0(x, v)=0,
\end{equation}
with $u^{\pm}$ being constant states and allowed to be different, here $u(x, t)$ is the bulk velocity of the viscous gas at position $x\in\R$ and time $t\geq0$. $f(x, v, t)$ is the distribution function of the particles occupying at time $t$, the position $x$ with velocity $v\in\R$. $\eps\in (0, \eps_0)$ with some $0<\eps_0<1$ is the viscosity of the gas.

The system \eqref{e:vis-burgers-vlasov} is related to a kinetic model of a two-phase flow in which a dispersed phase interacts with a kind of viscous gas. Such model arises in the description of various combustion phenomena, e.g. diesel engines. The model reads
\begin{equation}\label{e:model}
\left\{
\begin{array}{ll}
\displaystyle \rho_g(u_t+uu_x-\eps u_{xx})=E_d,  \\
\displaystyle f_t+v f_x+(F_df)_v=0,
\end{array}
\right.
\end{equation}
here $\rho_g$ is the density of the gas. The force term $E_d$ describing the exchange of impulse between the gas and the particles has a close relation with the drag force $F_d$ describing the friction of the viscous fluid in the droplets. The relation can be seen from the following formulas:
\begin{equation}\label{e:efd}
\begin{split}
&E_d=\mathcal{C}(r)\rho_p(u_p-u), \quad F_d=\mathcal{C}(r)(u(x, t)-v),\\
&\rho_p=\frac{4\pi}{3}\rho_lr^3\int_\R f(x, v, t)dv,\quad
\rho_p u_p=\frac{4\pi}{3}\rho_lr^3\int_\R f(x, v, t)vdv.
\end{split}
\end{equation}
In \eqref{e:efd},  $\rho_l$ is the density of liquid. $\mathcal{C}(r)$ is a constant depending on the radius $r$ of the droplets. In  \eqref{e:model}, the viscous Burgers' equation, i.e. the first equation models the evolution of viscous gas, while the Vlasov like equation, i.e. the second equation describes the evolution of the dispersed phase. Derivation of the model can be found in \cite{Will85}. Further information about our assumptions on \eqref{e:model} can also be found in \cite{G01, DR99}. We remark that when all the constants in $E_d/\rho_g$ and $F_d$  are all assumed to be 1, then \eqref{e:model} becomes \eqref{e:vis-burgers-vlasov}.

As for the well-posedness results of \eqref{e:vis-burgers-vlasov} or \eqref{e:model}, global existence and uniqueness of classical solutions to the  Cauchy problem with regular initial data  have been considered in \cite{DR99}. Global existence of weak solutions with finite energy is studied in \cite{G01}. Other complicated models of interactions between fluid-kinetic models, such as incompressible/compressible Euler/Navier-Stokes equations coupled with Vlasov/-Fokker-Planck equation are studied in \cite{RS96-1, RS96-2, Y13, MV07} and reference therein. Asymptotic problems like hydrodynamic limit and stratified limit of \eqref{e:model} are also considered in \cite{G01}, and one can see  \cite{BMP07, NPS01, GJ04-1, GJ04-2, Ja00-C, Ja00-P, MV08} for  asymptotic problems of other models related to Vlasov equations.  For the vanishing viscosity limit of Navier-Stokes equations, $L^p$ compensated compactness framework of $2\times2$ system of conservation laws is applied  to  yield the result in \cite{CP10}, which is also used in \cite{GL16}.

Our goal is to show that when $\eps\rightarrow0,$ smooth solutions to \eqref{e:vis-burgers-vlasov}-\eqref{e:initial} converge to a finite-energy weak solution to the following zero-viscosity equations:
\begin{equation}\label{e:burgers-vlasov}
\left\{
\begin{array}{ll}
\displaystyle u_t+uu_x=\int_\R fvdv-u\int_\R fdv,  \\
\displaystyle f_t+v f_x+(f(u-v))_v=0.
\end{array}
\right.
\end{equation}
The relative total energy for \eqref{e:burgers-vlasov} is denoted as
\begin{equation*}
E[u, f]:=\displaystyle\frac{1}{2}\int_\R (u-\bar{u})^2dx+\frac{1}{2}\int_\R\!\int_\R f(1+v^2)dvdx,
\end{equation*}
where  smooth monotone function $\bar{u}(x)$ is constructed as
\begin{equation}\label{e:ubar}
\bar{u}=
\begin{cases}
u^+, \quad &x\geq L_0;\\
\text{monotone}, \quad &-L_0<x<L_0;\\
u^-, \quad &x\leq -L_0
\end{cases}
\end{equation}
with $L_0>0$ large. As we can see from the formula, the relative total energy is the sum of the kinetic energy of the fluid and the particle (in statistic sense). In the paper we denote $[0, \infty)$  as $\R_+$,  $\R^2=\R\times\R.$  Finite-energy weak solutions to \eqref{e:burgers-vlasov} are defined as follows.
\begin{definition}\label{d:energy-solution}
Let $(u_0, f_0)$ be  given initial data with relative finite energy with respect to the end-states $(u^{\pm}, 0)$ at infinity, i.e. $E[u_0, f_0]\leq E_0<\infty.$ For any $T\in\R_+,$ a pair of functions $u:\R\times[0, T]\rightarrow\R,$ $f:\R^2\times[0, T]\rightarrow\R_+$ is called a \textit{finite-energy weak solution} of Cauchy problem \eqref{e:burgers-vlasov} and \eqref{e:initial}-\eqref{e:uf0-asymptoic} if the following holds:
\begin{itemize}
\item[(1)] There is a bounded function $C(E_0, t)$ defined on $\R_+\times[0, T]$, which is continuous in $t$ for each $E\in\R_+$ such that for a.e. $t>0,$
\begin{equation}\label{e:energyineq}
E[u, f](t)+\int_0^t\!\!\!\int\!\!\!\int f(u-v)^2dvdxds\leq C(E_0, t).
\end{equation}
\item[(2)] For any $\phi\in C^1_c(\R\times[0, T)),$
\begin{equation}\label{e:burgersweak}
\int_\R\phi(x, 0)u_0(x)dx+\int_0^T\!\!\!\int_\R\left(u\phi_t+\frac{1}{2}u^2 \phi_x+\phi\int_\R f(v-u)dv\right)dxdt=0,
\end{equation}
and for any $\varphi\in C^1_c((\R^2\times[0, T)),$
\begin{equation}\label{e:vlasovweak}
\int_\R\!\!\int_\R\varphi(x, v, 0)f_0(x, v)dxdv+\int_0^T\!\!\!\int_\R\!\!\int_\R f\varphi_t+ fv\varphi_x+f(u-v)\varphi_vdvdxdt=0.
\end{equation}

%\item[(3)] For the Burgers equation, for the special strictly convex entropy $\frac{1}{2}u^2,$ the following entropy inequality
%$$\left(\frac{1}{2}u^2\right)_t+\left(\frac{1}{3}u^3\right)_x+u\int_\R f(u-v)dv\leq0,$$
%holds in the weak sense.

\item[(3)]The initial data is achieved in the sense of distributions.
\end{itemize}
\end{definition}
In fact, our general idea on the definition of finite-energy weak solution is that under the condition that the initial total energy is finite the desired solution to \eqref{e:burgers-vlasov} shall also enjoys the finite-energy property, i.e. \eqref{e:energyineq} and satisfies the equations \eqref{e:burgers-vlasov} in weak sense, i.e. \eqref{e:burgersweak} and \eqref{e:vlasovweak}.

Now we are ready to state our main result.
\begin{theorem}\label{t:vanshing-limit}
Let the initial smooth functions $(u_0^\eps, f_0^\eps)$ satisfying the following conditions:
\begin{itemize}
\item[(i)] There exists $E_0>0$ such that $E[u_0^\eps, f_0^\eps]\leq E_0<\infty.$
\item[(ii)] It holds that
 \begin{align*}
 (u_0^\eps(x), f_0^\eps(x, v))\rightarrow(u_0(x), f_0(x, v)) \text{ as }\eps\rightarrow 0
 \end{align*}
 in the sense of distributions with $f_0\geq0.$
\end{itemize}
Let $(u^\eps, f^\eps)$ be the solution to the Cauchy problem \eqref{e:vis-burgers-vlasov} with initial data $(u_0^\eps, f_0^\eps)$ for any fixed $\eps>0.$ Then when $\eps\rightarrow 0$, there exists $(u, f)$ with $u(x, t)\in L^4_{loc}(\R\times[0, T])$ and $ f(x, v, t)\in L^\infty([0, T], (1+v^2)L^1(\R^2))$, which is  a finite-energy weak solution to the Cauchy problem \eqref{e:burgers-vlasov} and \eqref{e:initial}-\eqref{e:uf0-asymptoic} in the sense of Definition \ref{d:energy-solution}, along with corresponding subsequences of $u^\eps$ and $ f^\eps $ (still denoting as $(u^\eps, f^\eps)$) such that
\begin{align*}
&u^\eps\rightarrow u \text{ stongly in } L^r_{loc}(\R\times[0, T]), \text{ for } 1\leq r\leq 4, \text{ as } \eps\rightarrow 0,\\
&f^\eps\rightharpoonup f\text{ weakly in }  L^\infty([0, T], L^1(\R^2)), \text{ as } \eps\rightarrow 0.
\end{align*}

\end{theorem}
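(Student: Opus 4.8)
We outline the argument; the scheme is to derive $\eps$-uniform a priori bounds, pass to weakly convergent subsequences, upgrade $u^\eps$ to strong convergence via compensated compactness so that the Burgers flux and the coupling terms pass to the limit, and finally identify the limit as in Definition~\ref{d:energy-solution}. \textbf{Step 1 (uniform estimates).} Testing the first equation in \eqref{e:vis-burgers-vlasov} with $u^\eps-\bar{u}$ and the second with $1+v^2$, integrating, adding and using the structure of $\bar{u}$ (bounded, with compactly supported $\bar{u}_x,\bar{u}_{xx}$) together with Young's inequality to absorb the cross terms, one obtains the relative energy inequality
\begin{equation*}
E[u^\eps,f^\eps](t)+\eps\int_0^t\!\!\int_\R (u^\eps_x)^2\,dx\,ds+\int_0^t\!\!\int_\R\!\!\int_\R f^\eps(u^\eps-v)^2\,dv\,dx\,ds\le C(E_0,t),
\end{equation*}
uniformly in $\eps$. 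Hence $u^\eps-\bar{u}$ is bounded in $L^\infty(0,T;L^2(\R))$, $f^\eps\ge0$ is bounded in $L^\infty(0,T;(1+v^2)L^1(\R^2))$, $\sqrt{\eps}\,u^\eps_x$ is bounded in $L^2(\R\times(0,T))$ and $\sqrt{f^\eps}\,(u^\eps-v)$ in $L^2(\R^2\times(0,T))$. A Cauchy--Schwarz estimate in $v$ then bounds the coupling source $S^\eps:=\int_\R f^\eps(v-u^\eps)\,dv=\int_\R f^\eps v\,dv-u^\eps\int_\R f^\eps\,dv$ in $L^2(0,T;L^1(\R))$, and likewise $\int_\R f^\eps\,dv$ and $\int_\R f^\eps|v|\,dv$ in $L^\infty(0,T;L^1(\R))$.

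\textbf{Step 2 (higher integrability of $u^\eps$ and level sets of $f^\eps$).} The crucial $\eps$-independent estimate is $u^\eps\in L^4_{\mathrm{loc}}(\R\times[0,T])$. Since the only dissipation available is the $\eps$-small quantity $\eps\int(u^\eps_x)^2$, this cannot be read off the energy and must be drawn from the equation: I would test the first equation against $\chi(x)^2(u^\eps-\bar{u})^3$ with $\chi\in C^\infty_c(\R)$, so that the viscosity yields a favorable term, the convective part produces lower-order contributions controlled by $\bar{u}_x$ and the $L^2$ bound, and in the source one writes $v-u^\eps=(v-\bar{u})-(u^\eps-\bar{u})$, whereby $-\int\!\!\int f^\eps\chi^2(u^\eps-\bar{u})^4$ has a good sign ($f^\eps\ge0$) and absorbs, after Young's inequality, a fraction of the remainder, leaving quantities controlled by the energy and by moments of $f^\eps$. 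These last are propagated via the transport structure of the Vlasov equation: along the characteristics $\dot X=V$, $\dot V=u^\eps(X,t)-V$ one has $\frac{d}{dt}f^\eps(X,V,t)=f^\eps$ while the phase-space volume element contracts at rate one, so that
\begin{equation*}
|\{(x,v):f^\eps(x,v,t)>\lambda\}|=e^{-t}\,|\{(x,v):f_0^\eps(x,v)>\lambda e^{-t}\}|,
\end{equation*}
and therefore any integrability, equi-integrability and moment bound enjoyed by $f_0^\eps$ passes, uniformly in $\eps$ and $t\in[0,T]$, to $f^\eps(\cdot,t)$. In particular $f^\eps$ is bounded in $L^\infty(0,T;L^p(\R^2))$ for the relevant $p>1$, which together with the $v^2$-moment (for tightness) and the Dunford--Pettis theorem gives $f^\eps\rightharpoonup f$ weakly in $L^\infty(0,T;L^1(\R^2))$ with $f\ge0$ and $f\in L^\infty(0,T;(1+v^2)L^1(\R^2))$.

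\textbf{Step 3 (strong convergence of $u^\eps$).} With the uniform $L^4_{\mathrm{loc}}$ bound in hand I would run the $L^p$ compensated compactness framework for the scalar balance law $u^\eps_t+\big(\tfrac12(u^\eps)^2\big)_x=\eps u^\eps_{xx}+S^\eps$, in the spirit of \cite{CP10,GL16}. For a Lipschitz entropy pair $(\eta,q)$ (e.g. of Kruzhkov type), $q'=u\eta'$,
\begin{equation*}
\eta(u^\eps)_t+q(u^\eps)_x=\eps\,\eta(u^\eps)_{xx}-\eps\,\eta''(u^\eps)(u^\eps_x)^2+\eta'(u^\eps)S^\eps;
\end{equation*}
here $\eps\,\eta(u^\eps)_{xx}\to0$ in $H^{-1}_{\mathrm{loc}}$ because $\eps\,\eta'(u^\eps)u^\eps_x\to0$ in $L^2_{\mathrm{loc}}$, the term $-\eps\,\eta''(u^\eps)(u^\eps_x)^2$ has a sign and is bounded in the space of measures, and $\eta'(u^\eps)S^\eps$ is bounded in $L^2(0,T;L^1)$, hence in measures; since $q(u^\eps)\in L^2_{\mathrm{loc}}$ by the $L^4_{\mathrm{loc}}$ bound, Murat's lemma yields $H^{-1}_{\mathrm{loc}}$-compactness of the entropy production. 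The div--curl lemma applied to pairs of such entropy pairs forces the Young measure of $u^\eps$ to satisfy the Tartar commutation identities, and strict convexity of the flux $u\mapsto\tfrac12u^2$ reduces it to a Dirac mass. Consequently $u^\eps\to u$ a.e., and by the uniform $L^4_{\mathrm{loc}}$ bound $u^\eps\to u$ strongly in $L^r_{\mathrm{loc}}(\R\times[0,T])$ for every $1\le r<4$ and weakly in $L^4_{\mathrm{loc}}$.

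\textbf{Step 4 (passage to the limit and the main difficulty).} The linear terms in \eqref{e:burgersweak}--\eqref{e:vlasovweak} pass to the limit by weak/strong convergence, and $\tfrac12(u^\eps)^2\phi_x\to\tfrac12u^2\phi_x$ by the strong $L^2_{\mathrm{loc}}$ convergence of $u^\eps$. For the coupling terms one truncates the $v$-integrals (the tails being uniformly small by the $v^2$-moment bound) and uses weak--strong convergence (or velocity averaging) together with the level-set bounds of Step 2 to pass to the limit in $\int_\R f^\eps v\,dv$, $u^\eps\int_\R f^\eps\,dv$ and $\int_\R f^\eps u^\eps\varphi_v\,dv$, the last product being legitimate because $f^\eps$ is bounded in $L^p_{\mathrm{loc}}$ with $p\ge4/3$ while $u^\eps\to u$ strongly in $L^{p'}_{\mathrm{loc}}$. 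Weak lower semicontinuity of the convex energy functional along the approximating sequence gives \eqref{e:energyineq}, and the $\eps$-uniform bounds on $\partial_t u^\eps$ and $\partial_t f^\eps$ in negative-order spaces give the attainment of the initial data in the sense of distributions, so that $(u,f)$ meets Definition~\ref{d:energy-solution}. The main obstacle is the pair of estimates in Steps 2 and 3: obtaining the $\eps$-uniform $L^4_{\mathrm{loc}}$ bound for $u^\eps$ without spending the viscous dissipation — and thus balancing the $L^1$ source against the favorable sign produced by $f^\eps\ge0$ and the propagated moments — and then verifying the compensated-compactness hypotheses, in particular the $H^{-1}_{\mathrm{loc}}$-compactness of the entropy production, in the presence of the merely $L^1$-integrable source $S^\eps$.
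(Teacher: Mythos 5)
Your overall architecture (energy estimate, higher integrability, compensated compactness for $u^\eps$, level sets/characteristics for $f^\eps$, truncation of $v$-integrals to pass to the limit) matches the paper, and Steps 1, 3 and 4 are essentially the paper's argument. The genuine gap is in Step 2, in the derivation of the uniform $L^4_{loc}$ bound, which you correctly identify as the crucial estimate but whose proposed proof does not close. Testing the Burgers equation with $\chi^2(u^\eps-\bar u)^3$ fails for two reasons. First, writing $w=u^\eps-\bar u$, the convective term contains $-\int\chi^2 w^4 w_x\,dx=\tfrac{2}{5}\int\chi\chi' w^5\,dx$, a \emph{fifth}-power boundary term supported on $\supp\chi'$ that is controlled neither by the energy ($L^\infty_tL^2_x$) nor by the quartic quantity you are estimating; it cannot be absorbed. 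Second, the coupling term produces $\int\!\!\int f^\eps(v-\bar u)\chi^2 w^3\,dv\,dx$, and Young's inequality with exponents $(4/3,4)$ forces you to control $\int\!\!\int f^\eps(v-\bar u)^4\,dv\,dx$, a fourth velocity moment that the energy does not provide; propagating it along characteristics is circular, since $\tfrac{d}{dt}\int\!\!\int f^\eps v^4\lesssim -\int\!\!\int f^\eps v^4+\int (u^\eps)^4\!\int f^\eps dv$, i.e.\ it requires the very $L^4$ control you are after. The paper avoids both obstructions with the Chen--Perepelitsa flux trick: multiply the equation by $\va$, integrate over $(-\infty,x)$ to express $\tfrac12 (u^\eps)^2\va$ through the primitive $\int_{-\infty}^x u^\eps\va\,dy$ (which is bounded in $L^\infty_t$ by the energy), and only then multiply by $(u^\eps)^2\va$; every remainder is then estimated by Cauchy--Schwarz using only the energy, $\eps\int(u^\eps_x)^2$, and the second moment $\int\!\!\int f^\eps(v-u^\eps)^2$.

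Two secondary points. Your claim that $f^\eps$ is bounded in $L^\infty(0,T;L^p)$ for some $p>1$ is not justified by the hypotheses, which give only a uniform $(1+v^2)L^1$ bound on $f_0^\eps$; the paper instead decomposes $f_0^\eps$ into the layers $\mathbf{1}_{\{k\le f_0^\eps<k+1\}}f_0^\eps$, transports each layer (each bounded in $L^\infty$) by the characteristic flow with Jacobian $e^{t-\tau}$, and sums -- your level-set identity is correct and is the same mechanism, but Dunford--Pettis should be run through equi-integrability of $f_0^\eps$ rather than a fictitious $L^p$ bound. In Step 3, Kruzhkov entropies are unbounded, so $\partial_t\eta(u^\eps)+\partial_xq(u^\eps)$ is only bounded in $W^{-1,2}_{loc}$, which does not satisfy the $b>2$ hypothesis of Murat's interpolation lemma; the paper uses compactly supported truncated entropies $I_n,F_n,\Phi_n$ precisely to get the required $W^{-1,\infty}_{loc}$ bound.
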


Our strategy of proving Theorem \ref{t:vanshing-limit} is to apply $L^p$ compactness framework for scalar conservation laws and study the evolution of level sets after obtaining uniform basic energy estimate and uniform $L^4_{loc}$ estimate. Regarding on the $L^p$ compactness framework for scalar conservation laws,  it is first used in \cite{S82} and then in \cite{Lu92} with some improvement. The compactness framework is also generalized for more models in \cite{YZZ98}. In the present paper, our key difficulty is the estimate of $L^4_{loc}$ boundedness of $u^\eps$, which is obtained by making full use of the flux term of Burgers' equation. To show the $L^1$ weak convergence of $f^\eps$, our technique is studying the evolution of level sets, which is also utilized to handle the convergence of approximate solutions to Vlasov-Possion equations in \cite{ACF17}, but here our novel idea is that estimate the level sets through characteristic map and our key observation is that the Jacobian of the characteristic map is uniformly bounded as time grows.

In the present paper, we denote $\int=\int_\R.$ $C$  is a constant independent of $\eps$ but may vary line to line, $C(\cdot)$ denotes a constant depending on the parameters in the bracket. The rest of the paper is organised as follows. Section \ref{s:uniform} is devoted to show the uniform estimates and the proof of Theorem \ref{t:vanshing-limit} is provided in Section \ref{s:proof}.

%%%%%%%%%%%%%%%%%%%%%%%%%%%%
\section{Uniform eistimates}\label{s:uniform}
%%%%%%%%%%%%%%%%%%%%%%%%%%%
%%%%
Consider the Cauchy problem \eqref{e:vis-burgers-vlasov} with initial conditions
\begin{align*}
u^\eps(x, 0)=u_0^\eps(x),\quad f^\eps(x, v, 0)=f_0^\eps(x, v)\geq0
\end{align*}
satisfying (i) and (ii) in Theorem \ref{t:vanshing-limit}. When the viscosity $\eps$ is fixed, $u_0^\eps\in C^1(\R)$, $f^\eps_0\in C^1_c(\R^2),$ according to Theorem 2.1 in \cite{DR99}, one is able to obtain the global existence and uniqueness of a smooth solution $(u^\eps, f^\eps)$ with $f^\eps\geq0.$  On the other hand, in Section 4 of \cite{G01}, the author also gained a global weak solution to \eqref{e:vis-burgers-vlasov} when initial data $(u_0^\eps, f^\eps_0)$ only enjoys finite energy property and $f_0^\eps\in (L^1\cap L^2)(\R^2)$. Here, the smooth functions  $(u_0^\eps, f^\eps_0)$  in Theorem \ref{t:vanshing-limit} are regular, and we can also cut off $f_0^\eps$ with smooth function supported in $\{(x, v)||x|+|v|\leq\frac{1}{\eps}\}$ (still denote $f_0^\eps$) to make $f_0^\eps$ compact supported and it still satisfies (i) and (ii). Additionally, the initial data satisfies \eqref{e:uf0-asymptoic},  thus it is not hard to derive that
\begin{equation}\label{e:uf-asymptoic}
\lim_{x\rightarrow\pm\infty}u^\eps(x, t)=u^{\pm},  \quad \lim_{x, v\rightarrow\pm\infty}f^\eps(x, v, t)=0.
\end{equation}
Therefore, for smooth functions $(u_0^\eps, f^\eps_0)$ given in Theorem \ref{t:vanshing-limit} as initial data, for any fixed $\eps>0$, there always exists a unique smooth solution to Cauchy problem \eqref{e:vis-burgers-vlasov} and \eqref{e:initial}-\eqref{e:uf0-asymptoic}  satisfying \eqref{e:uf-asymptoic}.

We now establish two uniform estimates for solutions $(u^\eps, f^\eps)$ with respect to the viscosity coefficient $\eps>0,$  which plays key role in our proof. For simplicity, we drop the upper index $\eps$ in this section.

%%%%%%%%%%%%
\subsection{Energy estimate}
%%%%%%%%%%%%%%%%

With the help of the partial dissipative effect of the source terms in \eqref{e:vis-burgers-vlasov}, for the relative total energy $E[u, f]$, we have the following lemma.
\begin{lemma}\label{l:basic-energy}
Let $E[u_0, f_0]\leq E_0<\infty$ with positive constant $E_0$ independent of $\eps.$ Then there exists a constant $C=C(E_0, t, \bar{u})$ such that
\begin{equation*}
\sup_{\tau\in[0, t]}E[u, f](\tau)+\int_0^t\!\!\!\int\!\!\!\int f(v-u)^2dvdxd\tau+\int_0^t\!\!\!\int\!\eps |u_x|^2 dxd\tau\leq C.
\end{equation*}
\end{lemma}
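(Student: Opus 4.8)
The plan is to run a relative-energy estimate: test the viscous Burgers equation in \eqref{e:vis-burgers-vlasov} against $u-\bar u$ and test the Vlasov equation against the velocity weight $1+v^2$, then observe that the two source terms, although each one is sign-indefinite, add up to the dissipative quantity $-\int\!\!\int f(v-u)^2\,dv\,dx$ together with harmless remainders. A Gr\"onwall argument then closes the estimate.

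First I would multiply the first equation of \eqref{e:vis-burgers-vlasov} by $w:=u-\bar u$ and integrate in $x$. Since $\bar u$ is stationary the time term is $\frac{d}{dt}\frac12\int w^2\,dx$, which is meaningful precisely because $w$ decays at $\pm\infty$ by \eqref{e:uf-asymptoic} (testing against $u$ itself would be illegitimate, as $u$ does not decay). For the convection term I rewrite $uu_x=\partial_x(u^2/2)$, integrate by parts, and split $u^2=w^2+2w\bar u+\bar u^2$: the purely-$w$ piece $\tfrac12\int w^2 w_x\,dx$ integrates to zero (being $\tfrac16\int\partial_x(w^3)\,dx$), while the remaining pieces are bounded by $C(\bar u)\big(1+\int w^2\,dx\big)$ using $\bar u,\bar u_x\in L^\infty$ and the compact support of $\bar u_x$ coming from \eqref{e:ubar}. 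For the viscous term, one integration by parts gives $-\eps\int u_x^2\,dx+\eps\int\bar u_x u_x\,dx$, and Young's inequality together with $\eps<1$ converts this into $-\frac{\eps}{2}\int u_x^2\,dx+C(\bar u)$. What remains from the fluid equation is the coupling term $\int\!\!\int(u-\bar u)f(v-u)\,dv\,dx$.

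Next I would multiply the Vlasov equation by $1+v^2$ and integrate over $\R^2$. The transport term $\int\!\!\int v(1+v^2)f_x\,dv\,dx$ vanishes after integrating in $x$ (since $f\to0$), and integrating the force term by parts in $v$ yields $-2\int\!\!\int vf(u-v)\,dv\,dx$; hence $\frac{d}{dt}\frac12\int\!\!\int f(1+v^2)\,dv\,dx=\int\!\!\int vf(u-v)\,dv\,dx$. Adding this to the fluid identity, the two coupling terms combine, via $(u-\bar u)-v=-(v-u)-\bar u$, into
\[
\int\!\!\int(u-\bar u)f(v-u)\,dv\,dx+\int\!\!\int vf(u-v)\,dv\,dx=-\int\!\!\int f(v-u)^2\,dv\,dx-\int\!\!\int\bar u\,f(v-u)\,dv\,dx .
\]
This is the heart of the matter: the dissipation surfaces only after the fluid and kinetic balances are added. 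The residual indefinite term is controlled by Cauchy--Schwarz in the measure $f\,dv\,dx$, $\big|\int\!\!\int\bar u\,f(v-u)\big|\le\|\bar u\|_\infty\,\|f\|_{L^1}^{1/2}\big(\int\!\!\int f(v-u)^2\big)^{1/2}$, and then by Young's inequality, which absorbs half of the dissipation; here $\|f\|_{L^1(\R^2)}=\int\!\!\int f_0\,dv\,dx\le 2E_0$ is constant in time by conservation of mass (using $f\ge0$).

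Collecting the pieces yields a differential inequality
\[
\frac{d}{dt}E[u,f]+\tfrac12\int\!\!\int f(v-u)^2\,dv\,dx+\tfrac{\eps}{2}\int u_x^2\,dx\le C(\bar u)\big(1+E[u,f]\big),
\]
and Gr\"onwall's inequality, followed by integration in $t$, delivers the three claimed bounds with $C=C(E_0,t,\bar u)$, using $E[u,f]\ge0$. The main (and essentially only) obstacle is the handling of the source coupling: one must resist bounding the two source terms separately, since neither is controllable on its own, and instead add the two energy balances first so that $-\int\!\!\int f(v-u)^2\,dv\,dx$ appears; after that, estimating the leftover $\int\!\!\int\bar u\,f(v-u)$ together with the $\bar u$-dependent convection and viscous remainders is routine. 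All the integrations by parts above are legitimate for the smooth, rapidly decaying solutions under consideration, after the compact-support truncation of $f_0^\eps$ already performed.
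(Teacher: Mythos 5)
Your proposal is correct and follows essentially the same route as the paper: a relative-energy estimate testing the Burgers equation against $u-\bar u$ and the Vlasov equation against $\tfrac12(1+v^2)$, with the key step being the addition of the two balances so that the coupling terms combine into $-\int\!\!\int f(v-u)^2\,dv\,dx$ and the residual $-\int\!\!\int\bar u f(v-u)\,dv\,dx$ is absorbed into half of that dissipation by Young's inequality. The only cosmetic difference is that you control $\|f\|_{L^1}$ by conservation of mass, whereas the paper simply bounds $\int\!\!\int f\,dv\,dx\le 2E$ and lets Gr\"onwall handle it; both are valid.
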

\begin{proof}
A direct calculation gives
\begin{align*}
\frac{d E}{dt}&=\frac{d}{dt}\int\frac{1}{2}(u-\bar{u})^2dx+\frac{d}{dt}\int\!\!\!\int\frac{1}{2}f(1+v^2)dxdv\\
&=\int(u-\bar{u})u_t dx+\int\!\!\!\int\frac{1}{2}(1+v^2) f_tdxdv.
\end{align*}
Due to
\begin{align*}
u_t=\eps u_{xx}+\int f(v-u)dv-uu_x,
\end{align*}
using \eqref{e:uf-asymptoic} and integration by parts, we have
\begin{align*}
\int(u-\bar{u})u_t dx&=\int(u-\bar{u})\eps u_{xx} dx-\int(u-\bar{u})uu_xdx+\int\!\!\!\int f(u-\bar{u})(v-u)dvdx\\
&=I_1+I_2+I_3,
\end{align*}
where
\begin{align*}
I_1&=-\!\!\int\eps(u_x-\bar{u}_x)u_x dx,\\
I_2&=-\!\!\int(u-\bar{u})uu_x,\\
I_3&=\int\!\!\int f(u-\bar{u})(v-u)dvdx.
\end{align*}
We then bound the three terms $I_i, i=1, 2, 3$ one by one.
Note that from \eqref{e:ubar}, one is easy to see that $\bar{u}_x$ is bounded and compact supported in $[-L_0, L_0],$  thus we have
\begin{equation}\label{e:i1}
I_1\leq-\int\eps |u_x|^2dx+\eps\int_{-L_0}^{L_0}|\bar{u}_x||u_x|dx\leq-\frac{1}{2}\int\eps |u_x|^2dx+C.
\end{equation}
Similarly, for $I_2$, we have
\begin{equation}\label{e:i2}
\begin{split}
I_2&=-\int[(u-\bar{u})^2(u-\bar{u})_x+\bar{u}_x(u-\bar{u})^2+(u-\bar{u})\bar{u}(u-\bar{u})_x+(u-\bar{u})\bar{u}\bar{u}_x]dx\\
&=-\int[(u-\bar{u})^2(u-\bar{u})_x+\frac{1}{2}\bar{u}_x(u-\bar{u})^2+(u-\bar{u})\bar{u}\bar{u}_x]dx\\
&\leq C\int|u-\bar{u}|^2dx+C\\
&\leq CE+C,
\end{split}
\end{equation}
and
\begin{equation}\label{e:i3}
\begin{split}
I_3&=\int\!\!\!\int fu(v-u)dvdx-\int\!\!\!\int \bar{u}f(v-u)dvdx\\
&\leq\int\!\!\!\int fu(v-u)dvdx+\frac{1}{2}\int\!\!\!\int f(v-u)^2dvdx+C\int\!\!\!\int fdvdx.
\end{split}
\end{equation}
Furthermore, using
$$f_t=-(fv)_x-(f(u-v))_v$$
and \eqref{e:uf-asymptoic} we gain
\begin{equation}\label{e:f1}
\begin{split}
\int\!\!\!\int\frac{1}{2}(1+v^2) f_tdxdv&=-\int\!\!\!\int\frac{1}{2}(1+v^2) [(fv)_x+(f(u-v))_v]dxdv\\
&=\int\!\!\!\int fv(u-v)dvdx.
\end{split}
\end{equation}
Putting \eqref{e:i1}, \eqref{e:i2}, \eqref{e:i3} and \eqref{e:f1} together gives
\begin{align*}
\frac{d E}{dt}\leq CE+C-\frac{1}{2}\int\eps |u_x|^2dx -\frac{1}{2}\int\!\!\!\int f(u-v)^2dvdx.
\end{align*}
Then directly application of Gronwall's inequality contributes to the lemma.
\end{proof}

%%%%%%%%%%%%%%%%
\subsection{Higher integrability of velocity}
%%%%%%%%%%%%%%%%%%%
Although by Lemma \ref{l:basic-energy}, one has $u\in L^2,$ we require much higher regularity of $u$. Taking value of the flux term in Burgers' equation, we improve the regularity of $u$ to be $L^4_{loc}$.
\begin{lemma}\label{l:u-l4local}
Let $E[u_0, f_0]\leq E_0<\infty$ with positive constant $E_0$ independent of $\eps.$ Then for any compact set $K\subset\R$ and all $t>0$, there exists a constant $C=C(E_0, K, \bar{u}, t),$ independent of $\eps$ such that
\begin{align*}
\int_0^t\int_K u^4dxd\tau\leq C.
\end{align*}
\end{lemma}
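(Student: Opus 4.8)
The plan is to produce the quartic $\int u^4$ out of the quadratic Burgers flux by testing the $u$–equation against a localized, time–dependent, compactly supported multiplier built from $(u-\bar u)^2$, and then to control every remaining term by the energy estimate of Lemma~\ref{l:basic-energy}. Write $w=u-\bar u$; by Lemma~\ref{l:basic-energy} one has $\sup_{\tau\le t}\|w(\tau)\|_{L^2}^2\le C$. I would fix $\psi\in C^1_c(\R)$ with $0\le\psi\le1$ and $\psi\equiv1$ on $K$, a fixed bump $\eta\in C^1_c(\R)$, $\eta\ge0$, $\int_\R\eta=1$, with $\supp\eta$ lying to the right of $\supp\psi$, set $P(x)=\int_{-\infty}^x\eta$, and put
\[
M(x,t)=\int_{-\infty}^x w^2(y,t)\psi(y)\,dy-P(x)\int_\R w^2(y,t)\psi(y)\,dy .
\]
Since $\supp\psi\cap\supp\eta=\varnothing$, the function $M(\cdot,t)$ is supported in one fixed compact interval $I$, it satisfies $\|M(\cdot,t)\|_{L^\infty}\le 2\|\psi\|_{L^\infty}\|w(\cdot,t)\|_{L^2}^2\le C$ (uniformly in $\eps$), and $M_x=w^2\psi-\eta\int_\R w^2\psi$.

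Next I would multiply $u_t+(\tfrac{u^2}{2})_x=\eps u_{xx}+G$, with $G:=\int f(v-u)\,dv$, by $M$ and integrate over $\R$. Because $M(\pm\infty,t)=0$, the flux term equals $-\int\tfrac{u^2}{2}M_x=-\tfrac12\int u^2w^2\psi+\tfrac12\big(\int w^2\psi\big)\big(\int u^2\eta\big)$, and $u^2w^2=u^4+\bar u\cdot(\text{lower order in }u)$. Writing $\int u_tM=\frac{d}{dt}\int uM-\int uM_t$, inserting $2ww_t=-\tfrac23(u^3)_x+\bar u(u^2)_x+2\eps wu_{xx}+2wG$ into $M_t=2\int_{-\infty}^x ww_t\psi\,dy-2P\int_\R ww_t\psi\,dy$, and integrating by parts once more in $x$, the flux part of $-\int uM_t$ contributes $+\tfrac23\int u^4\psi$ modulo terms supported in $\supp\psi'$. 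Hence the two occurrences of $\int u^4\psi$ combine with the nonzero coefficient $\tfrac12-\tfrac23$, and after integrating over $[0,t]$ one is left with an identity of the shape
\[
\tfrac16\int_0^t\!\!\int u^4\psi\,dx\,d\tau=\Big[\int uM\Big]_0^t+\mathcal R(t),
\]
in which $\mathcal R(t)$ gathers all the remaining contributions.

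To estimate $\mathcal R$ I would use Lemma~\ref{l:basic-energy} and Young's inequality. The boundary term satisfies $\big|[\int uM]_0^t\big|\le\sup_\tau\|M(\tau)\|_{L^\infty}\|u(\tau)\|_{L^1(I)}\le C(E_0,K,\bar u,t)$. The viscous terms, after integration by parts, are of the form $\eps\int u_xM_x$ and the like, bounded by $\int_0^t\eps\|w_x\|_{L^2}^2\,d\tau\le C$ from Lemma~\ref{l:basic-energy} (several of them in fact tending to $0$ with $\eps$). Every term carrying a factor $\bar u$ and of degree $\le3$ in $u$ is absorbed via Young's inequality into $\delta\int u^4\psi+C_\delta$. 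The Vlasov source appears either directly as $\int GM$, with $\int_0^t\!\big|\int GM\big|\le C\|M\|_{L^\infty}\big(\int\!\!\int f\,dv\,dx\big)^{1/2}\big(\int_0^t\!\!\int\!\!\int f(u-v)^2\big)^{1/2}t^{1/2}\le C$, or inside $M_t$ as $\int\!\!\int\psi f|u||v-u|\,dv\,dx$, which Cauchy–Schwarz bounds by $\big(\int\!\!\int\psi f(u-v)^2\big)^{1/2}\big(\int\psi u^2\!\int f\,dv\,dx\big)^{1/2}$; the crucial point is the pointwise inequality $u^2\!\int f\,dv=\int u^2f\,dv\le2\int f(u-v)^2dv+2\int fv^2dv$, which gives $\int_0^t\!\int\psi u^2\!\int f\,dv\,dx\,d\tau\le2\int_0^t\!\int\!\int\psi f(u-v)^2+2\|\psi\|_{L^\infty}\int_0^t\!\int\!\int fv^2\le C$ by Lemma~\ref{l:basic-energy}. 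Finally the terms supported in $\supp\psi'$ I would estimate by $C\big(\int u^4|\psi'|\,dy\big)^{1/2}\le\delta\int u^4|\psi'|+C_\delta$, which leaves behind an integral of $u^4$ over $\supp\psi'$.

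Collecting everything, for any slightly larger cut–off $\widetilde\psi\equiv1$ on $\supp\psi$ one arrives at $\tfrac16\int_0^t\!\int\psi u^4\le C(E_0,K,\bar u,t)+\delta\int_0^t\!\int\psi u^4+\delta\|\psi'\|_{L^\infty}\int_0^t\!\int\widetilde\psi\,u^4$, with $\delta>0$ at our disposal. I would then run this along a nested family $\psi=\psi^{(0)}\prec\psi^{(1)}\prec\cdots$, each $\equiv1$ on $K$, with $\supp\psi^{(k)}\subset\{\psi^{(k+1)}\equiv1\}\subset[-R_0-k,R_0+k]$ and $\|(\psi^{(k)})'\|_{L^\infty}\le2$; setting $a_k=\int_0^t\!\int\psi^{(k)}u^4$ this reads $a_k\le C(E_0,K,\bar u,t)+\mu\,a_{k+1}$ with $\mu<1$ once $\delta$ is small. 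For each fixed $\eps>0$ the solution is smooth with $w\in L^2$ decaying, so each $a_k$ is finite and grows at most linearly in $k$ (the only unbounded contribution is $\int_0^t\!\int\psi^{(k)}\bar u^4$, which is linear in $k$); hence $\mu^k a_k\to0$, and iterating gives $a_0\le C(E_0,K,\bar u,t)$. Since $\psi^{(0)}\equiv1$ on $K$ and $u^4\le C(w^4+\bar u^4)$ with $\bar u$ bounded, this yields the assertion of Lemma~\ref{l:u-l4local}. The main obstacle — the one the authors single out as the crux of the paper — is precisely to make the quartic appear with a usable (nonzero) coefficient from the combined flux and time–derivative contributions while keeping \emph{every} error term uniform in $\eps$; the structural bound $u^2\int f\,dv\le2\int f(u-v)^2dv+2\int fv^2dv$ on the Vlasov source is what prevents the fluid–particle coupling from spoiling this uniformity.
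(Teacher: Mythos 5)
Your proposal is correct and follows the same underlying strategy as the paper: both arguments are of Chen--Perepelitsa type, generating the quartic by pairing the quadratic Burgers flux against a potential (an antiderivative of a quadratic, cut off in space), re-substituting the equation into the time-derivative cross term so that the two copies of $\int u^4$ combine with the net coefficient $\tfrac12-\tfrac23=-\tfrac16$, and taming the fluid--kinetic coupling exactly as you do, via $\int fu^2\,dv\le 2\int f(u-v)^2\,dv+2\int fv^2\,dv$ together with the dissipation terms of Lemma~\ref{l:basic-energy}. The concrete implementations are dual to one another: the paper integrates the equation over $(-\infty,x)$ against $\va$ and then multiplies by $u^2\va$, so its potential is $\int_{-\infty}^x u\va\,dy$ (bounded directly by the energy), whereas you multiply the equation by the compactified antiderivative $M$ of $(u-\bar u)^2\psi$; the resulting terms $J_1$--$J_5$ match your $\mathcal R(t)$ item for item. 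The one genuine divergence is at the absorption step: the paper disposes of the cutoff-boundary contributions (the second piece of its $J_3$, which after integration by parts produces both the $\tfrac23\int u^4\va^2$ term and a residue $\tfrac23\int u^3\va_x\int_{-\infty}^xu\va\,dy$) with a one-line appeal to H\"older, while you acknowledge that the residue is controlled only by $\int u^4|\psi'|$ over a region where the cutoff is not bounded below, and close the argument with a nested-cutoff iteration $a_k\le C_k+\mu\,a_{k+1}$, using the qualitative finiteness and at-most-linear growth of $a_k$ for fixed $\eps$ to kill $\mu^k a_k$. That extra step is a legitimate --- and arguably more careful --- way to finish; it costs you the need to verify subexponential growth of $a_k$ for each fixed $\eps$ (which does follow from the smoothness and decay of the fixed-$\eps$ solution), but it makes explicit a point the paper's terse treatment of $J_3$ leaves implicit.
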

\begin{proof}
 Let $\va$ be an arbitrary smooth compactly supported function such that $\va|_K=1$ and $0\leq\va\leq1.$ Motivated by the proof of Lemma 3.3 in \cite{CP10}, multiplying the viscous Burgers' equation by $\va$ and then integrating with respect to space variable over $(-\infty, x)$, we gain
\begin{align*}
\frac12u^2\va=&\eps u_x\va-\left(\int_{-\infty}^x u\va dy\right)_t+\int_{-\infty}^x
\left(\frac{1}{2}u^2\va_x-\eps u_x\va_x\right)dy\\
&+\int_{-\infty}^x\!\!\va\int f(v-u)dvdy.
\end{align*}
Multiply the above equation by $u^2\va$ and use the viscous Burgers' equation to get
\begin{align*}
\frac12u^4\va^2&=\eps u^2u_x\va^2-\left(u^2\va\int_{-\infty}^xu\va dy\right)_t+2\eps uu_{xx}\va\int_{-\infty}^xu\va dy\\
&\quad-2u^2u_x\va\int_{-\infty}^xu\va dy+2u\va\int f(v-u)dv\int_{-\infty}^xu\va dy\\
&\quad +u^2\va\int_{-\infty}^x
\left(\frac{1}{2}u^2\va_x-\eps u_x\va_x\right)dy+u^2\va\int_{-\infty}^x\!\!\va\int f(v-u)dvdy.
\end{align*}
Integrating over $\R\times(0, t)$ gives

\begin{align*}
\frac12\int_0^t\!\!\int u^4\va^2dxd\tau=\sum_{i=1}^5J_i,
\end{align*}
with
\begin{align*}
J_1&=\int_0^t\!\!\int\left[\eps u^2u_x\va^2
+2\eps uu_{xx}\va\int_{-\infty}^xu\va dy-u^2\va\int_{-\infty}^x\eps u_x\va_xdy\right]dxd\tau,\\
J_2&=\int\left(u_0^2\va\int_{-\infty}^xu_0\va dy\right)dx-\int\left(u^2\va\int_{-\infty}^xu\va dy\right)dx,\\
J_3&=\int_0^t\!\!\int\left[\frac{1}{2}u^2\va\int_{-\infty}^xu^2\va_xdy
-2u^2u_x\va\int_{-\infty}^xu\va dy\right]dxd\tau,\\
J_4&=\int_0^t\!\!\int\left[2u\va\int f(v-u)dv\int_{-\infty}^xu\va dy\right]dxd\tau,\\
J_5&=\int_0^t\!\!\int\left(u^2\va\int_{-\infty}^x\!\!\va\int f(v-u)dvdy\right)dxd\tau.
\end{align*}
In the following, we will estimate $J_i, i=1,\cdots, 5$ one by one.
For $J_1, $ an application of integration by parts yields
\begin{align*}
J_1=&\int_0^t\!\!\int\eps u^2u_x\va^2dxd\tau-\int_0^t\!\!\int\left(
u^2\va\int_{-\infty}^x\eps u_x\va_xdy\right)dxd\tau\\
&-\int_0^t\!\!\int\left(2\eps u_x^2\va\int_{-\infty}^xu\va dy+2\eps u^2u_x\va^2+2\eps uu_x\va_x\int_{-\infty}^xu\va dy\right)dxd\tau,\\
\leq&\delta\int_0^t\!\!\int u^4\va^2 dxd\tau+C(\delta, E_0, K, \bar{u}, t)\int_0^t\!\!\int\eps |u_x|^2 dxd\tau\\
\leq&\delta\int_0^t\!\!\int u^4\va^2 dxd\tau+C(\delta, E_0, K,  \bar{u}, t)
\end{align*}
with small $\delta$ to be determined, where we have used H\"older's inequality, Lemma \ref{l:basic-energy}  and
\begin{align*}
\left|\int_{\infty}^xu\va dy\right|\leq\int (u-\bar{u})^2dx+C(K, \bar{u})\leq C(E_0, K, \bar{u}).
\end{align*}
Again applying basic energy estimate and H\"older's inequality to $J_2$ and $J_3$, we have
\begin{align*}
J_2+J_3\leq \delta\int_0^t\!\!\int u^4\va^2 dxd\tau+C(\delta, E_0, K, \bar{u}, t).
\end{align*}
For $J_4$, the following inequality
\begin{align*}
\int fu^2dv\leq2\int f(v-u)^2 dv+2\int fv^2dv
\end{align*}
implies
\begin{align*}
\int_0^t\!\!\int\!\!\int fu^2 dvdxd\tau\leq C(E_0, \bar{u}, t).
\end{align*}
Thus we have
\begin{align*}
\int_0^t\!\!\int\left|u\int f(v-u)dv\right|dxd\tau
&\leq\int_0^t\!\!\int\left(\int fu^2dv\right)^{\frac{1}{2}}\left(\int f(v-u)^2 dv\right)^{\frac{1}{2}}dxd\tau\\
&\leq  C(E_0, \bar{u},  t),
\end{align*}
which then gives $J_4\leq C(E_0,  K, \bar{u}, t).$
Similarly, we can derive
\begin{align*}
\left|\int f(v-u)dv\right|\leq \left(\int fdv\right)^{\frac{1}{2}}\left(\int f(v-u)^2 dv\right)^{\frac{1}{2}}.
\end{align*}
Hence we gain $J_5\leq C(E_0, K, \bar{u}, t).$  Collecting all the estimates of $J_i, i=1, \cdots, 5$ and taking $\delta\leq\frac{1}{16} $ yield the lemma.
\end{proof}

%%%%%%%%%%%%%%%%%%%%%%%%%
%%%%%%%%%%%%%%%%%%%%%%%%
\section{Vanishing viscosity limit}\label{s:proof}
%%%%%%%%%%%%%%%%%%%%%
%%%%%%%%%%%%%%%%%%%

In this section, we will use the estimates in Section \ref{s:uniform} to establish the convergence of $(u^\eps, f^\eps),$ whose limit is just a finite-energy weak solution to Cauchy problem \eqref{e:burgers-vlasov} and \eqref{e:initial}-\eqref{e:uf0-asymptoic}. Based on the uniform estimates Section \ref{s:uniform}, we get the following:
\begin{align}
&\sup_{\tau\in[0, t]}E[u^\eps, f^\eps](\tau)\leq C, \label{eq:total-energy} \\
&\int_0^t\!\!\!\int\!\!\!\int f^\eps(v-u^\eps)^2dvdxd\tau,\label{eq:fvu-bound}\\
&\int_0^t\!\!\!\int\!\eps |u^\eps_x|^2 dxd\tau\leq C, \label{eq:u-vis-bound}\\
&\int_0^t\int_K (u^\eps)^4dxd\tau\leq C, \text{ for any compact set } K\subset\R. \label{eq:u-l4-local-bound}
\end{align}
We then divide the proof into three subsections. In Section \ref{s:limit-f} and Section \ref{s:limit-u}, we will apply the uniform estimates \eqref{eq:u-vis-bound}, \eqref{eq:u-l4-local-bound}  and \ref{eq:total-energy} to show the convergence of $f^\eps$ and $u^\eps$ respectively. In Section \ref{s:limit-equation} we will prove the obtained limit is our desired solution.

%%%%%%%%%%%%
%%%%%%%%%%%%
\subsection{Limit of  distribution function}\label{s:limit-f}
%%%%%%%%%%%
%%%%%%%%%%%
To show $f^\eps$ is weakly compact in $L^1(\R^2),$ a.e. $t\in[0, T],$ one can study the  evolution of  level sets of $f^\eps(x, v, t)$ and $f_0^\eps(x, v),$ which is motivated by Steps 1-3 in the proof of Theorem 2.7 in \cite{ACF17 }. For our case, the estimate on the level sets is done through the characteristic map. Our key observation is that the Jacobian of the characteristic map remains uniformly bounded as time grows.

Assume $|\{f_0=k\}|=0$ for every $k\in\mathbb{N}.$ (Otherwise one can consider $\tau+k$ in place of $k$  for some $\tau\in(0, 1).$)  From the strong convergence of $f_0^\eps,$ one could deduce that when $\eps\rightarrow0,$
\begin{align*}
f_0^{\eps, k}=:\mathbf{1}_{\{k\leq f_0^\eps<k+1\}}f_0^\eps\rightarrow f_0^k:=\mathbf{1}_{\{k\leq f_0<k+1\}}f_0 \text{ in } L^1(\R^2) \text{ for any } k\in\mathbb{N},
\end{align*}
where $\mathbf{1}_A$ is the characteristic function of set $A.$ We shall also analyze the evolution of corresponding level sets of $f^\eps(x, v, t)$.  In fact, for Vlasov equation
$$f^\eps_{t}+(f^\eps v)_{x}+(f^\eps (u^\eps-v))_{v}=0$$
with $u^\eps$ being a smooth function,  the equation can be rewritten as
\begin{equation*}
f^\eps_{t}+vf^\eps_{x}+(u^\eps-v)f^\eps_{v}=f^\eps,  \quad  f^\eps(x, v, 0)=f^\eps_{0}(x,v),
\end{equation*}
which has a unique smooth solution
\begin{equation}\label{e:f}
\displaystyle f^\eps(x,v,t)=f^\eps_{0}(X^\eps(0; x,v,t), V^\eps(0; x,v,t))e^t,
\end{equation}
where  $X(s;x,v,t), V(t;x,v,t)$ are backward characteristic curves satisfying
\begin{align}
&\frac{dX^\eps(s;x,v,t)}{ds}=V^\eps(s;x,v,t), \quad  X^\eps(t;x,v,t)=x; \label{e:x} \\
&\frac{dV^\eps(s;x,v,t)}{ds}=u^\eps(s,X^\eps(s;x,v,t))-V^\eps(s;x,v,t), \quad  V^\eps(t;x,v,t)=v. \label{e:v}
\end{align}
It is not hard to see that $X^\eps, V^\eps$ are well-defined from the theory of ordinary differential equation (ODE). From \eqref{e:x} and \eqref{e:v}, one is able to show that the Jacobian $J(t)=\det \nabla_{x, v}(X^\eps, V^\eps)$ of the map $\mathcal{J}(s):(x, v)\mapsto(X^\eps, V^\eps)$ is nonnegative and satisfies the following ODE
\begin{equation*}
\left\{
\begin{array}{ll}
\displaystyle \frac{d J(t)}{dt}=J(t)\text{div}_{x, v}(X^\eps, V^\eps)=-J(t),\\
\displaystyle J(t)=1,
\end{array}
\right.
\end{equation*}
so $J(\tau)=e^{t-\tau}$ for any $\tau\in [0, t].$  For any $t>0,$ noting \eqref{e:f}, set
\begin{align*}
f^{\eps, k}(x, v, t)= e^t\mathbf{1}_{\{k\leq f^\eps_0 \circ \mathcal{J}(0)<k+1\}}f^\eps_0\circ\mathcal{J}(0).
\end{align*}
Then we have $f^{\eps, k}$ is a weak solution to the Vlasov equation and
\begin{align*}
\int\!\!\int f^{\eps, k}dvdx=&\int\!\!\int e^t\mathbf{1}_{\{k\leq f^\eps_0 \circ \mathcal{J}(0)<k+1\}}f^\eps_0\circ\mathcal{J}(0) dvdx\\
=&\int\!\!\int e^t\mathbf{1}_{\{k\leq f^\eps_0<k+1\}}f^\eps_0 J(0)^{-1}dV^\eps dX^\eps\\
=&\int\!\!\int f^{\eps, k}_0 dV^\eps dX^\eps,
\end{align*}
hence for any $t>0, $
$$\|f^{\eps, k}(x, v, t)\|_{L^1(\R^2)}=\|f^{\eps, k}_0(x, v)\|_{L^1(\R^2)}.$$
It is easy to find that $0\leq f^{\eps, k}\leq (k+1)e^T, $ thus up to subsequences, for any $k\in\mathbb{N}.$
\begin{align*}
f^{\eps, k}\rightharpoonup f^k \text{ weakly* in  } L^{\infty}(\R^2\times[0, T]), \text{ as } \eps\rightarrow0.
\end{align*}
Similar to \cite{ACF17 }, one can use the test function $\phi(t)\mathbf{1}_K\text{sign}(f^k)(x, v, t)$ for any compact subset $K\subset\R^2$ and any $\phi\in C_c^\infty(\R_+)$ in the above weak convergence to show
$$\|f^{k}(x, v, t)\|_{L^1(\R^2)}\leq\|f^{k}_0(x, v)\|_{L^1(\R^2)}$$
for almost all $t$. We define
\begin{align*}
f(x, v, t):=\sum_{k=0}^\infty f^k(x, v, t), \text{ for } (x, v, t)\in\R^2\times[0, T],
\end{align*}
then it is easy to derive
$$\|f\|_{L^1(\R^2)}\leq\sum_{k=0}^\infty \|f^{k}(x, v, t)\|_{L^1(\R^2)}\leq \sum_{k=0}^\infty\|f^{k}_0(x, v)\|_{L^1(\R^2)}=\|f_0\|_{L^1(\R^2)}.$$
We then prove
\begin{equation}\label{e:f-convergent}
f^{\eps}\rightharpoonup f \text{ weakly in  } L^{\infty}([0, T], L^1(\R^2)), \text{ as } \eps\rightarrow0.
\end{equation}
In fact, for any $\va\in L^{\infty}(\R^2),$ we derive
\begin{align*}
&\left|\int\!\!\int \va(f^\eps-f)dxdv\right|=\left|\sum_{k=0}^\infty\int\!\!\int \va(f^{\eps, k}-f^k)dxdv\right|\\
\leq&\left|\sum_{k=0}^{k_0-1}\int\!\!\int \va(f^{\eps, k}-f^k)dxdv\right|+\sum_{k=k_0}^\infty\int\!\!\int |\va||f^{\eps, k}|dxdv+
\sum_{k=k_0}^\infty\int\!\!\int |\va||f^k|dxdv.
\end{align*}
The first term converges to zero as $\eps\rightarrow0$ due to the weak convergence of $f^{\eps, k}$ for any finite $k_0.$ The last two terms can be estimated by
\begin{align*}
&\sum_{k=k_0}^\infty\int\!\!\int |\va||f^{\eps, k}|dxdv+
\sum_{k=k_0}^\infty\int\!\!\int |\va||f^k|dxdv\\
\leq &\|\va\|_{L^\infty(\R^2)}\left(\|f_0^\eps\mathbf{1}_{\{f_0^\eps\geq k_0\}}\|_{L^1(\R^2)}+\|f_0\mathbf{1}_{\{f_0\geq k_0\}}\|_{L^1(\R^2)}\right),
\end{align*}
which converges to zero as $k_0\rightarrow\infty$ thanks to the fact that  $f^{\eps}_0$ and $f_0$ are bounded in $L^1(\R^2)$.
Finally, we obtain \eqref{e:f-convergent}.

%%%%%%%%%
\subsection{Limit of velocity}\label{s:limit-u}
%%%%%%%%%%%%%%%%%%%
To show the convergence of $u^\eps$, we will utilize the $L^p$ compactness framework for Burgers' equation. Thus we first recall
a proposition on such framework, which is resulting from \cite{Lu92}. The framework is shown mainly by div-curl lemma and compactness of some entropies for Burgers' equation.
\begin{proposition}\label{p:framework}
Let $u^\eps(x, t)$ satisfy the following two conditions:
\begin{itemize}
\item[(C1)] $u^\eps(x, t)$ is uniformly bounded in $L^p_{loc}(\R\times[0, T])$ for some $p>2$;
\item[(C2)] Both $\partial_t I_n(u^\eps(x, t))+\partial_x F_n(u^\eps(x, t))$ and  $\partial_t F_n(u^\eps(x, t))+\partial_x\Phi_n(u^\eps(x, t))$ lie in a compact set of $H^{-1}_{loc}(\R\times[0, T])$ with respect to $\eps$ for any $n\in\mathbb{N},$ where
\begin{align*}
&I_n(u)=\begin{cases}u, &\text{ when } |u|\leq n,\\
0, &\text{ when } |u|\geq 2n,
\end{cases}
\text{ and } I_n\in C^2(\R), \quad |I_n(u)|\leq |u|, \quad |I_n'(u)|\leq 2,\\
&F_n(u)=\int_0^u I_n'(s)sds, \quad \Phi_n(u)=\int_0^u F'_n(s)sds.
\end{align*}
\end{itemize}
Then there exists a subsequence (still denoted $u^\eps$) such that $u^\eps\rightarrow u$ almost everywhere and strongly in $L^r_{loc}(\R\times[0, T])$ for all $1\leq r\leq p.$
\end{proposition}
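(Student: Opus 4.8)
The plan is to run the classical compensated--compactness / Young-measure reduction for a scalar conservation law with (here) quadratic, hence genuinely nonlinear, flux, in the spirit of \cite{S82,Lu92}. First I would invoke (C1): since $u^\eps$ is uniformly bounded in $L^p_{loc}(\R\times[0,T])$ with $p>2$, a subsequence generates a Young measure $\{\nu_{x,t}\}$ with $\int_\R|\lambda|^p\,d\nu_{x,t}(\lambda)\in L^1_{loc}$, so that for every continuous $g$ of subcritical growth $g(u^\eps)\rightharpoonup \ov{g(u)}(x,t):=\langle\nu_{x,t},g\rangle$ in $L^1_{loc}$. The whole statement then reduces to proving that $\nu_{x,t}$ is a Dirac mass $\dl_{u(x,t)}$ for a.e.\ $(x,t)$, since once this is known the weak limit $u:=\langle\nu_{x,t},\lambda\rangle$ is also the a.e.\ limit of $u^\eps$ and, by the $L^p$ bound and equi-integrability, the strong $L^r_{loc}$ limit for $1\le r\le p$.

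To reduce the Young measure I would use (C2) via the div--curl lemma. Because $I_n'$ is compactly supported, the truncated quantities $I_n(u^\eps)$, $F_n(u^\eps)$, $\Phi_n(u^\eps)$ are uniformly bounded in $L^\infty_{loc}$; combined with the hypothesis that $\dt I_n(u^\eps)+\dx F_n(u^\eps)$ and $\dt F_n(u^\eps)+\dx\Phi_n(u^\eps)$ are precompact in $H^{-1}_{loc}$, the div--curl lemma applies to the vector fields built from the entropy pairs $(I_n(u^\eps),F_n(u^\eps))$ and $(F_m(u^\eps),\Phi_m(u^\eps))$, and passing to the limit in $\eps$ (and rewriting the weak limits through the Young measure) yields the commutation (Tartar) identity
\begin{align*}
\langle\nu_{x,t},\,I_n\Phi_m-F_nF_m\rangle=\langle\nu_{x,t},I_n\rangle\,\langle\nu_{x,t},\Phi_m\rangle-\langle\nu_{x,t},F_n\rangle\,\langle\nu_{x,t},F_m\rangle
\end{align*}
for every $n,m\in\mathbb{N}$ and a.e.\ $(x,t)$.

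Next I would let $n,m\to\infty$. Using (C1) one has $I_n\to\lambda$, $F_n\to\lambda^2/2$, $\Phi_n\to\lambda^3/3$ with domination by $|\lambda|$, $\lambda^2$, $|\lambda|^3$ respectively, so the identity passes to the limit. Introducing two independent copies $\lambda,\mu$ distributed according to $\nu_{x,t}$ and performing the standard algebraic rearrangement, the limiting identity collapses to
\begin{align*}
\langle\nu_{x,t}\otimes\nu_{x,t},\,(\lambda-\mu)^4\rangle=0,
\end{align*}
which forces $\nu_{x,t}=\dl_{u(x,t)}$ with $u(x,t)=\langle\nu_{x,t},\lambda\rangle$, for a.e.\ $(x,t)$. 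Finally, $u^\eps\rightharpoonup u$ weakly and, the associated Young measure being this Dirac mass, $u^\eps\to u$ in measure on compact sets; the uniform $L^p_{loc}$ bound then gives equi-integrability of $|u^\eps-u|^r$ on compacts for $r<p$, whence $u^\eps\to u$ strongly in $L^r_{loc}$ for $1\le r\le p$ and a.e.\ along a further subsequence.

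The heart of the matter, and the step I expect to be the main obstacle, is the limit passage $n,m\to\infty$ in the commutation identity: it requires enough integrability of the Young measure to make sense of, and to pass to the limit in, the cubic-flux terms, which is precisely where the higher-integrability estimate (C1) is indispensable; by contrast, identifying the correct entropy pairs and checking the $L^\infty_{loc}$ bounds needed for the div--curl lemma is routine, and the full details are given in \cite{Lu92}.
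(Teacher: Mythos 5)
The paper does not actually prove this proposition: it is quoted from \cite{Lu92} (with \cite{S82,YZZ98} as related references), accompanied only by the remark that the proof rests on the div--curl lemma and compactness of entropies for Burgers' equation. Your sketch is precisely that argument --- Young measure generation from (C1), the Tartar commutation identity for the truncated entropy pairs $(I_n,F_n)$ and $(F_m,\Phi_m)$ via div--curl and (C2), and reduction of $\nu_{x,t}$ to a Dirac mass --- so in approach it matches what the paper delegates to the literature, and the algebra is right: the limiting identity $m_4/12=m_1m_3/3-m_2^2/4$ is equivalent to $\langle\nu\otimes\nu,(\lambda-\mu)^4\rangle=0$. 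One substantive caveat, which you half-acknowledge but do not resolve: your passage to the limit $n,m\to\infty$ dominates $I_n\Phi_m$, $F_nF_m$ by $C|\lambda|^4$ and $\langle\nu,\Phi_m\rangle$ by the third moment, so as written it needs $\langle\nu_{x,t},|\lambda|^4\rangle<\infty$ a.e., i.e.\ effectively $p\geq 4$ rather than the stated $p>2$; for $2<p<4$ the truncations cannot simply be sent to infinity and the more careful analysis of \cite{Lu92,YZZ98} (working with the truncated identity directly on the support of $\nu$) is genuinely needed. This does not affect the paper, where the proposition is applied with $p=4$ thanks to Lemma \ref{l:u-l4local}. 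A second, minor point: convergence in measure plus a uniform $L^p_{loc}$ bound yields strong convergence in $L^r_{loc}$ only for $r<p$; the endpoint $r=p$ claimed in the statement requires equi-integrability of $|u^\eps|^p$, which neither you nor the paper addresses (and which is not used downstream, since the weak formulation only involves $u$ and $u^2$).
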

With such $L^p$ framework, we only need to verify (C1)-(C2) to show the convergence of $u^\eps.$ It is easy to see from \eqref{eq:u-l4-local-bound}, we have
$$ u^\eps\in L^4_{loc}(\R\times[0, T]),$$
thus (C1) is satisfied by $u^\eps(x, t)$ for $p=4.$

To verify (C2), we also require an important lemma: Murat's lemma, which is useful in proving compactness of some sequences.
\begin{lemma}\label{l:murat} (Murat's Lemma \cite{Chen86, Tar79})
Let $\Omega\in\mathbb{R}^n$ be a open bounded subset, $D_1$ be a compact
set in $W^{-1,a}_{loc}(\Omega) $, $D_2$ be a bounded set in $W_{loc}^{-1,b}(\Omega)$ for some constants $a, b$ satisfying
$1<a\leq2<b.$. Furthermore, let $D_0\subset\mathcal{D}(\Omega)$ such that $D_0\subset D_1\cap D_2.$  Then there exists $D_*$, a compact set in $H^{-1}_{loc}(\Omega)$ such
that $D_0\subset D_* $.
\end{lemma}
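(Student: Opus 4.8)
Since the statement is essentially the classical Murat (compensated‑compactness) lemma, the plan is to reduce it to a single interpolation fact together with a soft diagonal argument. First I would observe that because $D_0\subset D_1\cap D_2$, the set $D_0$ is itself relatively compact in $W^{-1,a}_{loc}(\Omega)$ — a subset of a relatively compact set is relatively compact — and, at the same time, bounded in $W^{-1,b}_{loc}(\Omega)$. Hence it suffices to prove: any set which is relatively compact in $W^{-1,a}_{loc}(\Omega)$ and bounded in $W^{-1,b}_{loc}(\Omega)$ is relatively compact in $H^{-1}_{loc}(\Omega)$; one then simply takes $D_*$ to be the closure of $D_0$ in $H^{-1}_{loc}(\Omega)$, which is compact and contains $D_0$.

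The next step is to localize onto whole space. Exhaust $\Omega$ by open sets $\Omega_1\Subset\Omega_2\Subset\cdots$ with $\bigcup_j\Omega_j=\Omega$, and for each $j$ fix $\phi_j\in C_c^\infty(\Omega_{j+1})$ with $\phi_j\equiv 1$ on $\Omega_j$. For $1<p<\infty$, multiplication by $\phi_j$ sends a distribution lying in $W^{-1,p}$ near $\supp\,\phi_j$ to an element of $W^{-1,p}(\mathbb{R}^n)$ of controlled norm, since $\langle\phi_j T,\psi\rangle=\langle T,\phi_j\psi\rangle$ and $\|\phi_j\psi\|_{W^{1,p'}}\le C\|\psi\|_{W^{1,p'}(\mathbb{R}^n)}$. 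Consequently $\phi_jD_0$ is relatively compact in $W^{-1,a}(\mathbb{R}^n)$ and bounded in $W^{-1,b}(\mathbb{R}^n)$. Because $1<a\le 2<b$, there is $\theta\in[0,1)$ with $\frac12=\frac{1-\theta}{a}+\frac{\theta}{b}$, and complex interpolation on $\mathbb{R}^n$ combined with the duality theorem for interpolation (valid since $W^{1,a'},W^{1,b'}$ are reflexive) gives $[\,W^{-1,a}(\mathbb{R}^n),W^{-1,b}(\mathbb{R}^n)\,]_\theta=W^{-1,2}(\mathbb{R}^n)=H^{-1}(\mathbb{R}^n)$, hence the interpolation inequality
\[
\|T\|_{H^{-1}(\mathbb{R}^n)}\le C\,\|T\|_{W^{-1,a}(\mathbb{R}^n)}^{\,1-\theta}\,\|T\|_{W^{-1,b}(\mathbb{R}^n)}^{\,\theta}
\]
for every compactly supported $T\in W^{-1,a}(\mathbb{R}^n)\cap W^{-1,b}(\mathbb{R}^n)$.

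With this in hand the compactness is soft. Given a sequence in $\phi_jD_0$, extract a subsequence converging in $W^{-1,a}(\mathbb{R}^n)$; it stays bounded in $W^{-1,b}(\mathbb{R}^n)$, so by the displayed inequality it is Cauchy, hence convergent, in $H^{-1}(\mathbb{R}^n)$, its limit coinciding with the $W^{-1,a}$‑limit because $H^{-1}\hookrightarrow W^{-1,a}$ continuously on distributions supported in a fixed compact set. Thus $\phi_jD_0$ is relatively compact in $H^{-1}(\mathbb{R}^n)$, and restricting to $\Omega_j$ (where $\phi_j\equiv1$) shows $D_0$ is relatively compact in $H^{-1}(\Omega_j)$ for every $j$. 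A diagonal extraction over $j$ then yields, from any sequence in $D_0$, a subsequence converging in $H^{-1}(\Omega_j)$ for all $j$, i.e. in $H^{-1}_{loc}(\Omega)$. Therefore $\overline{D_0}^{\,H^{-1}_{loc}(\Omega)}$ is compact, and $D_*:=\overline{D_0}^{\,H^{-1}_{loc}(\Omega)}$ does the job.

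The main obstacle is the interpolation identity for negative‑order Sobolev spaces and, above all, making it coexist smoothly with the localization: one must check that multiplication by a smooth cutoff is bounded on $W^{-1,p}$, that the whole‑space spaces $W^{-1,p}(\mathbb{R}^n)$ interpolate to $H^{-1}(\mathbb{R}^n)$ with the exponent $\theta$ dictated by the $L^p$‑scale, and that the continuous inclusions relating $H^{-1}$, $W^{-1,a}$ and $W^{-1,b}$ over compactly supported distributions are consistent, so that the limits identified in different norms really agree. These are all standard items of interpolation theory; assembling them carefully — or, if one prefers to avoid abstract interpolation, replacing this step by the explicit representation $W^{-1,p}=\{g_0+\sum_i\partial_i g_i:\,g_i\in L^p\}$ together with a mollification estimate — is where the actual work sits.
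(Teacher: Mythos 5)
The paper does not actually prove this lemma: it is quoted verbatim as a classical result with references to Chen and Tartar, and is used as a black box. Your argument is therefore not competing with a proof in the paper; it is a reconstruction of the standard proof, and it is essentially correct. The route you take --- reduce to ``relatively compact in $W^{-1,a}_{loc}$ plus bounded in $W^{-1,b}_{loc}$'', localize with cutoffs $\phi_j$ (multiplication by $\phi_j$ being bounded on $W^{-1,p}$ by duality), invoke $[W^{-1,a}(\R^n),W^{-1,b}(\R^n)]_\theta=H^{-1}(\R^n)$ with $\tfrac12=\tfrac{1-\theta}{a}+\tfrac{\theta}{b}$, and conclude via the multiplicative interpolation inequality and a diagonal extraction --- is exactly the argument in the cited sources. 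The one point worth making explicit is that the Cauchy estimate
\begin{equation*}
\|T_n-T_m\|_{H^{-1}}\leq C\,\|T_n-T_m\|_{W^{-1,a}}^{\,1-\theta}\,\|T_n-T_m\|_{W^{-1,b}}^{\,\theta}
\end{equation*}
only forces convergence because $1-\theta>0$, which is precisely where the strict inequality $b>2$ enters (if $b=2$ one would get $\theta=1$ and the compact endpoint would carry no weight); you use this implicitly and it deserves a sentence. A minor cosmetic remark: the paper's statement writes $D_0\subset\mathcal{D}(\Omega)$, which should be read as $D_0\subset\mathcal{D}'(\Omega)$; your proof correctly treats the elements as distributions lying in both $W^{-1,a}_{loc}$ and $W^{-1,b}_{loc}$, so nothing changes.
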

Then we turn to verify (C2). Noting that both  $I_n(u) $ and $F_n(u)$ are $C^2$ compact supported functions of $u,$  one gets
\begin{equation}
\label{e:w-1-bound}
\left.\begin{array}{ll}
\partial_t I_n(u^\eps(x, t))+\partial_x F_n(u^\eps(x, t)),\\
\partial_t F_n(u^\eps(x, t))+\partial_x\Phi_n(u^\eps(x, t))
\end{array}
\right\}\text{ are bounded in } W^{-1, \infty}_{loc}(\R\times[0, T]).
\end{equation}
Furthermore, from
\begin{align*}
\partial_t F_n(u^\eps)+\partial_x\Phi_n(u^\eps)&=\eps u^\eps_{xx} F_n'(u^\eps)+F_n'(u^\eps)\int\!\! f^\eps(v-u^\eps)dv\\
&=(\eps u^\eps_xF'_n(u^\eps))_x-\eps (u^\eps_x)^2 F''_n(u^\eps)+F_n'(u^\eps)\int\!\! f^\eps(v-u^\eps)dv,
\end{align*}
along with $|F_n'(u^\eps)|\leq |I'_n(u^\eps)u^\eps|\leq C(n),$  and $|F_n''(u^\eps)|\leq C(n),$  one can derive
\begin{equation}
\label{e:w-1-alpha-1}
\partial_t F_n(u^\eps)+\partial_x\Phi_n(u^\eps) \text{ is compact in } W^{-1, \alpha}_{loc}(\R\times[0, T]) \text{ for some } \alpha\in(1, 2).
\end{equation}
Indeed, using $\sqrt{\eps}u_x^\eps\in L^2(\R\times[0, T])$  and $|F_n'(u^\eps)|\leq C(n),$ we get $(\eps u^\eps_xF'_n(u^\eps))_x$ is compact in $H^{-1}_{loc}(\R\times[0, T]).$ Employing Lemma \ref{l:basic-energy} we also have
$$-\eps (u^\eps_x)^2 F''_n(u^\eps)+F_n'(u^\eps)\int\!\! f^\eps(v-u^\eps)dv\in L^1(\R\times[0, T]),$$
 which also implies that it is compact in
$W^{-1, \alpha}_{loc}(\R\times[0, T])$ for some $\alpha\in(1, 2)$ by embedding theorem and Schauder theorem, so we obtain \eqref{e:w-1-alpha-1}.  Similarly, one can also gain
\begin{equation}
\label{e:w-1-alpha-2}
\partial_t I_n(u^\eps)+\partial_x F_n(u^\eps)\text{ is compact in } W^{-1, \alpha}_{loc}(\R\times[0, T]) \text{ for some } \alpha\in(1, 2).
\end{equation}
Combining with \eqref{e:w-1-bound}, \eqref{e:w-1-alpha-1} and \eqref{e:w-1-alpha-2}, applying Murat's Lemma (see Lemma \ref{l:murat}), one gets (C2). Therefore, applying Proposition \ref{p:framework} to $u^\eps,$  one can seek a $u(x, t)\in L^4_{loc}(\R\times[0, T])$ and a subsequence of $u^\eps$ (still denoted as $u^\eps$) such that
\begin{align*}
&u^\eps\rightarrow u \text{ a.e. } \R\times[0, T],\\
&u^\eps\rightarrow u \text{ stongly in } L^r_{loc}(\R\times[0, T]), \text{ as } \eps\rightarrow 0, \text{ for } 1\leq r\leq4.
\end{align*}

%%%%%%%%%%%%%%%%%%%%%
\subsection{Limit of equations and conclusions}\label{s:limit-equation}
%%%%%%%%%%%%%%%%%%%%%%%
Before taking limit of equations and proving that $(u, f)$ is a weak solution of the Cauchy problem  \eqref{e:burgers-vlasov} and \eqref{e:initial}-\eqref{e:uf0-asymptoic}, we shall  first get  the convergences of nonlinear terms in the equations: $\int f^\eps dv$, $\int f^\eps vdv$ and $u^\eps\int f^\eps dv.$

\textit{(1) Convergence $~~~~\displaystyle\!\!\int f^\eps vdv\rightarrow\int fvdv$.}   From energy estimate and the lower  semi-continuity of kinetic energy, we deduce that
\begin{align}\label{e:fvenergy}
\int\!\!\int v^2f dxdv\leq\liminf_{\eps\rightarrow0}\int\!\!\int v^2f^\eps dxdv\leq C.
\end{align}
Let $\chi(s)=\mathbf{1}_{[-1, 1]}(s).$  For any $\va\in C_c^\infty(\R),$ for any  $L>0$,  inspired by \cite{ACF17},  we observe that
\begin{align*}
\int\left(\int f^\eps vdv-\int fvdv\right)\va dx=&\int\!\!\int (f^\eps-f)v\chi(\frac{v}{L})\va dvdx\\
&+\int\!\!\int (f^\eps v(1-\chi(\frac{v}{L}))\va dvdx\\
&-\int\!\!\int fv(1-\chi(\frac{v}{L}))\va dvdx.
\end{align*}
The first term in the right hand side converges to 0 due to the weak convergence of $f^\eps$ to $f$ in $L^1(\R^2).$ Thanks to \eqref{e:fvenergy}, the remained two terms can be estimated  as
\begin{align*}
\left|\int\!\!\int (f^\eps v(1-\chi(\frac{v}{L}))\va dvdx\right|\leq\frac{\|\va\|_{L^\infty}}{L}\int\!\!\int f^\eps v^2 dvdx\leq \frac{C\|\va\|_{L^\infty}}{L},\\
\left|\int\!\!\int (fv(1-\chi(\frac{v}{L}))\va dvdx\right|\leq\frac{\|\va\|_{L^\infty}}{L}\int\!\!\int f v^2 dvdx\leq \frac{C\|\va\|_{L^\infty}}{L}.
\end{align*}
Letting $L$ go to infinity, we can get
\begin{align*}
\int f^\eps vdv\rightharpoonup \int fvdv  \text{ weakly in } L^\infty([0, T], L^1(\R)).
\end{align*}

\textit{(2) Convergence  $~~\!\!\int f^\eps dv\rightarrow \!\!\int fdv$.}  Similar to Step 1, we can get
\begin{align*}
\int f^\eps dv\rightharpoonup \int fdv  \text{ weakly in } L^\infty([0, T], L^1(\R)).
\end{align*}

\textit{(3) Convergence  $~~\displaystyle u^\eps\!\!\int f^\eps dv\rightarrow u\!\!\int fdv$.}  We easily get from the energy estimate that for any $T\in\R_+,$
\begin{align*}
\int_0^T\!\!\int (u^\eps)^2 \!\!\int f^\eps dxdv\leq 2\int_0^T\!\!\int\!\!\int v^2f^\eps dxdv+2\int_0^T\!\!\int\!\!\int (u^\eps-v)^2 f^\eps dxdv\leq C.
\end{align*}
Due to the semicontinuity  of the integral functional and the strong compactness of $u^\eps$ and weak compactness of $~~\displaystyle\!\!\int f^\eps dv,$ one also gets
\begin{align}\label{e:fuenergy}
\int_0^T\!\!\int u^2\!\!\int f dxdv\leq\liminf_{\eps\rightarrow0}\int_0^T\!\!\int (u^\eps)^2\!\!\int  f^\eps dxdv\leq C.
\end{align}
Following  the same strategy of proving the convergence of $~~\displaystyle\!\!\int vf^\eps dv,$  denoting $\chi(s)=\mathbf{1}_{[-1, 1]}(s),$  we gain for any $\va\in C^\infty_c(\R\times(0, T)),$ for arbitrary $L>0,$
\begin{align*}
&\int_0^T\!\!\!\int\left(u^\eps\int f^\eps dv-u\int fdv\right)\va dxdt\\
=&\int_0^T\!\!\int\!\!\int \left(u^\eps\chi(\frac{u^\eps}{L})\int f^\eps dv-u\chi(\frac{u}{L})\int fdv\right)\va dvdx\\
&+\int_0^T\!\!\int\!\!\int u^\eps f^\eps (1-\chi(\frac{u^\eps}{L})\va dvdx\\
&-\int_0^T\!\!\int\!\!\int fu(1-\chi(\frac{u}{L})\va dvdx.
\end{align*}
For the first term in the right hand side of the above integrals, since we have uniform bounds on the integrands, by Dominated Convergence Theorem it converges to 0 as $\eps\rightarrow0.$    The last two terms  can be estimated through \eqref{e:fuenergy} as before,
\begin{align*}
\left|\int_0^T\!\!\!\int\!\!\int f^\eps u^\eps(1-\chi(\frac{u^\eps}{L})\va dvdx\right|\leq\frac{\|\va\|_{L^\infty}}{L}\int_0^T\!\!\!\int\!\!\int f^\eps (u^\eps)^2 dvdx\leq \frac{C\|\va\|_{L^\infty}}{L},\\
\left|\int_0^T\!\!\!\int\!\!\int fu(1-\chi(\frac{u}{L})\va dvdx\right|\leq\frac{\|\va\|_{L^\infty}}{L}\int_0^T\!\!\!\int\!\!\int f u^2 dvdx\leq \frac{C\|\va\|_{L^\infty}}{L},
\end{align*}
both of which go to 0 upon letting  $L\rightarrow\infty.$

Now we are in a position to show $(u, f)$ is a finite-energy weak solution to Cauchy problem \eqref{e:burgers-vlasov} and \eqref{e:initial}-\eqref{e:uf0-asymptoic}.

\textit{(1) Weak solutions.} It suffices to show \eqref{e:burgersweak} and \eqref{e:vlasovweak} hold for $(u, f).$ Here we only show \eqref{e:burgersweak} by the uniform estimate  \eqref{eq:u-vis-bound}, since \eqref{e:vlasovweak} can be verified similarly.  Multiplying the first equation in \eqref{e:vis-burgers-vlasov} by $\phi\in C^\infty_c(\R\times[0, T))$, integrating over $\R\times[0, T]$, and employing integration by parts we obtain
\begin{align*}
&\int_\R\phi(x, 0)u^\eps_0(x)dx+\int_0^T\!\!\!\int_\R\left(u^\eps\phi_t+\frac{1}{2}(u^\eps)^2 \phi_x+\phi\int_\R f^\eps(v-u^\eps)dv\right)dx\\
&\qquad -\eps\int_0^T\!\!\!\int u^\eps_x\phi_x dxdt=0,
\end{align*}
For the last term in the left hand side, it follows from \eqref{eq:u-vis-bound} that
\begin{align*}
\left|\eps\int_0^T\!\!\!\int u^\eps_x\phi_x dxdt\right|\leq\sqrt{\eps}\left(\int_0^T\!\!\!\int\eps (u^\eps_x)^2dxdt\right)^{\frac{1}{2}}\left(\int_0^T\!\!\!\int\phi_x^2dxdt\right)^{\frac{1}{2}}\leq C\sqrt{\eps},
\end{align*}
which goes to 0 by letting $\eps\rightarrow0.$  The convergence of the other terms have already obtained, thus we have \eqref{e:burgersweak}. Hence, the obtained limit  $(u, f)$ is a weak solution to \eqref{e:burgers-vlasov}.

\textit{(2) Finite energy.} We also show the obtained limit functions also enjoy the finite energy property. Obviously, from  almost everywhere convergence of $u^\eps$ to $u$ and Lemma \ref{l:basic-energy}, we have
$$u^\eps-\bar{u}\rightarrow u-\bar{u} \text{ strongly in } L^2(\R).$$
Hence by the convexity of energy we gain
\begin{align*}
\int(u-\bar{u})^2dx\leq\liminf_{\eps\rightarrow0}\int(u^\eps-\bar{u})^2dx\leq C.
\end{align*}
Besides,
\begin{align*}
\int_0^T\!\!\!\int\!\!\!\int f(u-v)^2dvdxdt\leq 2\int_0^T\!\!\!\int\!\!\!\int fu^2dvdxdt+2\int_0^T\!\!\!\int\!\!\!\int fv^2dvdxdt\leq C.
\end{align*}
Combing with \eqref{e:fvenergy}, we gain \eqref{e:energyineq}.

Therefore, $(u, f)$ is a finite-energy weak solution to Cauchy problem \eqref{e:burgers-vlasov} and \eqref{e:initial}-\eqref{e:uf0-asymptoic}. Besides, it is easy to verify (3) in Definition \ref{d:energy-solution} for $(u, f)$ by the smoothness and compactness conditin (ii) of $(u_0^\eps, f_0^\eps)$ in Theorem \ref{t:vanshing-limit}. The proof of Theorem \ref{t:vanshing-limit} is then completed.

\section*{Acknowledgments}
The research of the W. Cao is supported by ERC Grant Agreement No. 724298 and
he also thanks for the hospitality of Max-Plank Institute for Mathematics in the Sciences.
The work of T. Wang is partially supported by the NNSFC  grant No. 11601031.

\bigskip

\end{document}